\def\C{\mathbb C}
\def\N{\mathbb N}
\def\Z{\mathbb Z}
\def\R{\mathbb R}
\def\O{\mathcal O}
\def\diam{\operatorname{diam}}
\def\dim{\operatorname{dim}}
\newtheorem{theorem}{Theorem}[section]
\newtheorem{lemma}[theorem]{Lemma}
\theoremstyle{remark}
\newtheorem*{ack}{Acknowledgment}
\newtheorem{remark}[theorem]{Remark}
\numberwithin{equation}{section}
\begin{document}
\title{Non-escaping points of Zorich maps }
\author{Walter Bergweiler and Jie Ding}
\date{}
\maketitle
\begin{abstract}
We extend results about the dimension of the radial Julia set of certain exponential functions
to quasiregular Zorich maps in higher dimensions. Our results improve on previous estimates
of the dimension also in the special case of exponential functions.
\end{abstract}

\section{Introduction} \label{intro}
The \emph{Julia set} $J(f)$ of an entire function $f$ is the set where the iterates $f^n$ of $f$
do not form a normal family and the \emph{escaping set} $I(f)$ consists of all points which tend
to infinity under iteration of $f$. These sets play a fundamental role in the iteration 
theory of entire functions. A result of Eremenko~\cite{Eremenko1989} says that $J(f)=\partial I(f)$.
We refer to~\cite{Bergweiler1993,Schleicher2010} for an introduction to the iteration theory of entire 
functions.

We consider the exponential family consisting of the functions $E_\lambda(z):=\lambda e^z$
with $\lambda\in\C\setminus\{0\}$.
If $0<\lambda<1/e$, then $E_\lambda$ has an attracting fixed
point. Devaney and Krych~\cite{Devaney1984} showed that then $J(E_\lambda)$ is equal to the
complement of the attracting basin of this fixed point and $J(E_\lambda)$ consists of 
uncountably many pairwise disjoint curves (called \emph{hairs}) which connect a finite point
(called the \emph{endpoint} of the hair) with $\infty$.
Let $C_\lambda$ be the set of endpoints of the hairs that form $J(E_\lambda)$.
The results of Devaney and Krych also yield that $J(E_\lambda)\setminus C_\lambda\subset I(E_\lambda)$.

McMullen~\cite{McMullen1987} showed that $\dim J(E_\lambda)=2$. 
Here and in the following 
$\dim X$ denotes the Hausdorff dimension of a set $X$.
In fact, McMullen showed that
$\dim I(E_\lambda)=2$ and $I(E_\lambda)\subset J(E_\lambda)$.
 Karpi\'nska~\cite{Karpinska1999b} obtained the surprising result that 
$\dim J(E_\lambda)\setminus C_\lambda=1$. 

A $3$-dimensional analogue of the results of Devaney and Krych, McMullen and Karpi\'nska
was obtained in~\cite{Bergweiler2010}. Here the exponential function was replaced by 
a quasiregular map $F\colon \R^3\to\R^3$ introduced by Zorich~\cite[p.~400]{Zorich1967}.
As noted in~\cite[\S~8.1]{Martio1975},
Zorich maps exist in $\R^d$ for all $d\geq 2$,
and this can be used
(see~\cite[Remark~9]{Bergweiler2010} and~\cite{Comduehr2019})
to obtain a $d$-dimensional analogue of the above results.

To define a Zorich map, following~\cite[\S 6.5.4]{Iwaniec2001}, we fix $\rho>0$ and consider the cube
\begin{equation}\label{5a}
Q:=\left\{x\in \R^{d-1}\colon \|x\|_\infty \leq \rho\right\}=[-\rho,\rho]^{d-1}
\end{equation}
and the upper hemisphere
\begin{equation}\label{5b}
U:=\left\{x\in \R^{d}\colon \|x\|_2 = 1, x_d\geq 0 \right\}.
\end{equation}
Let $h\colon Q\to U$ be a bi-Lipschitz map and define
\begin{equation}\label{1a}
F\colon Q\times \R \to \R^d,\
F(x_1,\dots,x_d)=e^{x_d}h(x_1,\dots,x_{d-1}).
\end{equation}
The map $F$ is then extended to a map $F\colon \R^d\to\R^d$ by repeated reflection at hyperplanes.

The main result of~\cite{Bergweiler2010} says that if $a\in\R$ is sufficiently large,
then the map
\begin{equation}\label{1b}
f_a\colon \R^d\to\R^d,\
f_a(x)=F(x)-(0,\dots,0,a),
\end{equation}
has an attracting fixed point $\xi_a$ such that 
the complement of the attracting basin of $\xi_a$ consists of hairs, the set of endpoints of the hairs
has dimension~$d$, but the union of the hairs without the endpoints has dimension~$1$.

The purpose of this paper is to extend some further results about
the exponential family to the higher dimensional setting.
Let $J_{\rm bd}(E_\lambda)$ be the set
of all $z\in J(E_\lambda)$ for which the orbit $\{E_\lambda^n(z)\colon n\in\N\}$ is bounded.
Karpi\'nska~\cite[Theorem~2]{Karpinska1999a} also showed that $\dim J_{\rm bd}(E_\lambda)>1$ for 
all $\lambda\in (0,1/e)$ and 
\begin{equation}\label{5e}
1+\frac{1}{\log\log(1/\lambda)}<\dim  J_{\rm bd}(E_\lambda)<
1+\frac{1}{\log\log\log(1/\lambda)}
\end{equation}
if $\lambda$ is sufficiently small.

Urba\'nski and Zdunik~\cite{Urbanski2003} considered the set
$J_{\rm r}(E_\lambda):=J(E_\lambda)\setminus I(E_\lambda)$.  We note that,
in general, the notation $J_{\rm r}(f)$ is used for the \emph{radial Julia set} of an entire function $f$,
but for the functions $E_\lambda$ with $0<\lambda<1/e$ this agrees with the above definition; 
see~\cite{Rempe2009} for a discussion of radial Julia sets.
Clearly $J_{\rm r}(E_\lambda)\supset J_{\rm bd}(E_\lambda)$.
Urba\'nski and Zdunik proved~\cite[Theorem~4.5]{Urbanski2003} that
$\dim J_{\rm r}(E_\lambda)=\dim J_{\rm bd}(E_\lambda)<2$ for $0<\lambda<1/e$.
They noted that~\eqref{5e} thus yields that
\begin{equation}\label{5e0}
\lim_{\lambda\to 0} \dim J_{\rm r}(E_\lambda)=1,
\end{equation}
but they also gave a direct proof of this~\cite[Theorem~7.2]{Urbanski2003}.

Urba\'nski and Zdunik also showed that the function $\lambda\mapsto \dim J_{\rm r}(E_\lambda)$
is continuous~\cite[Theorem~4.7]{Urbanski2003} in the interval $(0,1/e)$
and in fact real-analytic~\cite[Theorem~9.3]{Urbanski2004a}.
The function $\lambda\mapsto\dim J_{\rm r}(E_\lambda)$, and in particular its behavior as $\lambda\to 1/e$,
was further studied in~\cite{Havard2010,Urbanski2004b}.

We consider the corresponding sets for the Zorich maps. Denoting by $\xi_a$ the attracting fixed 
point of $f_a$ we thus put
\begin{equation}\label{5e1}
J_{\rm bd}(f_a):=\left\{x\in\R^d\colon f_a^n(x)\not\to \xi_a
\ \text{and} \ 
(f_a^n(x))\ \text{is bounded}\right\}
\end{equation}
and
\begin{equation}\label{5e2}
J_{\rm r}(f_a):=\left\{x\in\R^d\colon f_a^n(x)\not\to \xi_a
\ \text{and} \ 
|f_a^n(x)|\not\to \infty\right\}.
\end{equation}
\begin{theorem}\label{thm3}
If $a$ is sufficiently large, then
\begin{equation}\label{5f}
d-1+\frac12 \frac{\log\log a}{\log a} - \frac{\log\log\log a}{\log a}
< \dim J_{\rm bd}(f_a) \leq  \dim J_{\rm r}(f_a) \leq d-1+\frac{\log\log a}{\log a}.
\end{equation}
\end{theorem}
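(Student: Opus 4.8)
\emph{Plan of proof.}
I intend to follow the strategy of Karpi\'nska~\cite{Karpinska1999a} and of Urba\'nski and Zdunik~\cite{Urbanski2003} for the exponential family, adapted to the quasiregular setting via the structural description of $f_a$ from~\cite{Bergweiler2010}. The facts I would use are: $|F(x)|=e^{x_d}$; $\|DF(x)\|\asymp e^{x_d}$, with distortion bounded in terms of the bi-Lipschitz constant of $h$; $F$ maps each fundamental domain $D_m$, $m\in\Z^{d-1}$, bijectively onto a half-space; and $\xi_a$ lies near $-ae_d$, with a large ball about it inside its immediate basin. These imply that $f_a^n(x)\not\to\xi_a$ forces $(f_a^n(x))_d\ge\log a-O(1)$ for all $n$, and that an orbit returning to $\overline{B(0,R)}$ is, at consecutive returns, confined to a slab about $\{x_d\approx\log a\}$ whose transverse section is a $(d-1)$-ball of radius $O(R)$; such a slab meets $\asymp(T/\rho)^{d-1}$ of the $D_m$ if its transverse radius is $T$, and each inverse branch $g_m:=(f_a|_{D_m})^{-1}$ is a bounded-distortion contraction shrinking lengths near a point $y$ by a factor $\asymp 1/\max(a,|y'|)$, where $y'$ is the transverse part of $y$.

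For the upper bound I would use that Hausdorff dimension is countably stable and $J_{\rm r}(f_a)=\bigcup_{R}Y_R$, where $Y_R:=\{x:|f_a^n(x)|\le R\text{ for infinitely many }n\}$ minus the basin of $\xi_a$, so it is enough to bound $\dim Y_R$ uniformly in $R$; since $Y_R\subset\bigcup_{k\ge 0}f_a^{-k}(Y_R\cap\overline{B(0,R)})$ and every branch of $f_a^{-k}$ is a contraction, it is in fact enough to bound $\dim(Y_R\cap\overline{B(0,R)})$. That set lies in the limit set of the first-return map of $f_a$ to $\overline{B(0,R)}$; bounding the diameter of the cylinder of an itinerary $(m_i)_{i\ge 0}$, $m_i\in\Z^{d-1}$, by $\prod_i 1/\max(a,\rho|m_i|)$ and summing $s$-th powers, one is led to the pressure $P(s)=\log S$ with $S=\sum_{m\in\Z^{d-1}}\max(a,\rho|m|)^{-s}$. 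A computation gives $S\asymp\rho^{-(d-1)}a^{-\varepsilon}/\varepsilon$ for $s=d-1+\varepsilon$, so $P(s)=0$ forces $\varepsilon\log a+\log\varepsilon=O(1)$ and hence $\varepsilon=\tfrac{\log\log a}{\log a}-\tfrac{\log\log\log a}{\log a}+O(\tfrac1{\log a})<\tfrac{\log\log a}{\log a}$; summing cylinder diameters over all long itineraries then makes the $s$-dimensional mass of the resulting cover of $Y_R\cap\overline{B(0,R)}$ vanish for every $s>d-1+\tfrac{\log\log a}{\log a}$. The delicate point is to show that orbit excursions to large $x_d$, occurring between non-consecutive returns, contribute negligibly; this holds because such steps contract by a proportionally large factor, so the higher return-time terms are dominated by the first-return term.

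For the lower bound I would fix a parameter $T$, pass to the iterated function system formed by those $g_m$ mapping the slab of transverse radius $aT$ into itself, and note that its limit set $\Lambda$ is a compact subset of $J_{\rm bd}(f_a)$, so $\dim J_{\rm bd}(f_a)\ge\dim\Lambda$. To bound $\dim\Lambda$ below I would build a mass distribution $\mu$ on $\Lambda$, giving each depth-$n$ cylinder mass comparable to a suitable power of its diameter and weighted so as to favour itineraries staying in the regime where the branches and their compositions remain sufficiently round, and verify a Frostman estimate $\mu(B(x,r))\lesssim r^s$; this yields $\dim\Lambda\ge s$ with $s=d-1+\tfrac12\tfrac{\log\log a}{\log a}-\tfrac{\log\log\log a}{\log a}$. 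The factor $\tfrac12$, in place of the $1$ of the upper bound, reflects the loss — already present in the one-dimensional case — caused by restricting to that regime.

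I expect the main obstacle, shared by both bounds, to be the strong non-uniformity of the system: the inverse branches contract by $\asymp 1/a$ near the core of a slab but by $\asymp 1/|y'|$ far out, and far out they are moreover highly anisotropic, with eccentricity $\asymp|y'|/a$. Thus the system is far from self-similar, the naive value $\log(\#\{\text{branches}\})/\log(\text{expansion})=d-1$ must be corrected, and — exactly as in the exponential case — determining the precise size of the correction requires a careful optimization of the slab radius and of the weighting of the itineraries. The passage to a $(d-1)$-dimensional transverse space and to merely quasiregular branches is absorbed by the bounded-distortion property inherited from $h$; the hardest single step should be the Frostman estimate, where the anisotropy of the cylinders has to be controlled.
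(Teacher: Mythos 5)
Your outline follows the same two-pronged strategy as the paper: for the upper bound, covers by pullbacks under the inverse branches, whose diameters shrink per step by a factor $\asymp(\rho^2|r|^2+a^2)^{-1/2}$, so that the relevant quantity is the lattice sum $\sum_{r}(\rho^2|r|^2+a^2)^{-t/2}\asymp a^{d-1-t}/(t-d+1)$; for the lower bound, an iterated function system whose limit set lies in $J_{\rm bd}(f_a)$. Your upper-bound sketch is essentially sound: the excursion issue you flag is handled in the paper without any first-return map, simply by covering the set of points returning to $B(0,R)$ at \emph{some} time $m\geq n$ and letting the geometric factor $\tau^{m-1}$ (with $\tau=c_7a^{d-1-t}/(t-d+1)<1$ for $t=d-1+\log\log a/\log a$) drive the total $t$-dimensional pre-measure to $0$ as $n\to\infty$.

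The gap is in the lower bound, and it sits exactly at the step you defer. First, no Frostman estimate and no control of anisotropic cylinders is needed: the paper invokes Falconer's result (Lemma~\ref{lemma4}), which turns uniform lower bi-Lipschitz constants $b_j$ of the branches directly into $\dim K_0\geq t$ once $\sum_j b_j^t\geq 1$, so the eccentricity you worry about never enters. Second, and decisively, single inverse branches do not suffice for this criterion: on the invariant set $K=B(-\overline{a},R)\cap H_{\geq M}$ with $R\asymp\rho N$, the uniform lower derivative bound for \emph{every} branch $\Lambda^r$ is only $\asymp 1/R$, and $\sum_{|r|\leq N}R^{-t}\asymp N^{d-1-t}<1$ for $t>d-1$. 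The paper's key device is to use two-fold compositions $\Lambda^s\circ\Lambda^r$, for which the lower Lipschitz constant is $\asymp\bigl(R\sqrt{\rho^2|r|^2+a^2}\bigr)^{-1}$ because the first branch lands in $T(r)$ where the second one contracts much less; summing over the $\asymp N^{d-1}$ choices of $s$ and using Lemma~\ref{lemma3a} for the sum over $r$ gives $\sum_{r,s}b_{r,s}^t\gtrsim (Na)^{d-1-t}/(t-d+1)$, and the choice $N\sim a\,e^{1/\gamma(a)}$ with $\gamma(a)=\frac12\frac{\log\log a}{\log a}-\frac{\log\log\log a}{\log a}$ makes this $\asymp a^{-2\gamma(a)}/\gamma(a)\to\infty$. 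The exponent $2$ in $a^{-2\gamma(a)}$ (one factor of $a$ from $R$, one from the lattice sum) is precisely the origin of the constant $\frac12$ in the theorem. Your proposal asserts the value $\frac12$ and attributes it to a restriction of regime, but leaves the slab radius, the weighting, and the optimization unspecified; since that computation is the entire content of the stated lower bound, the argument as proposed is incomplete.
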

It follows from Theorem~\ref{thm3}  that if $0<\eta<\frac12$ and $a$ is sufficiently large, then
\begin{equation}\label{5f1}
d-1+\eta \frac{\log\log a}{\log a} 
< \dim J_{\rm bd}(f_a),
\end{equation}
but this does not hold for $\eta=1$. It remains open
for which $\eta\in [\frac12,1)$
the inequality~\eqref{5f1} holds.

In order to compare Theorem~\ref{thm3} with~\eqref{5e} we note that for $d=2$, $\rho=\pi/2$,
\begin{equation}\label{5g}
h\colon \left[-\frac{\pi}{2},\frac{\pi}{2}\right]\to \R^2=\C,\  h(x)=(\sin x,\cos x)=\sin x+i\cos x=ie^{-ix},
\end{equation}
and $z=(x,y)=x+iy$ the Zorich map $F$ takes the form
\begin{equation}\label{5h}
F(z)=F(x,y)=e^yh(x) =ie^{y-ix}=ie^{-iz} .
\end{equation}
Hence for $a>0$ and $\lambda=e^{-a}$ we have
\begin{equation}\label{5i}
f_a(z)=F(z)-ia=i(e^{-iz}-a) =(L\circ E_\lambda \circ L^{-1})(z)
\end{equation}
with $L(z)=i(z-a)$.
Thus $f_a$ is conjugate to $E_\lambda$. Since $a=\log(1/\lambda)$ we see that Theorem~\ref{thm3} not only
extends the results for the functions $E_\lambda$ to higher dimensions, but also improves~\eqref{5e}
and~\eqref{5e0} to
\begin{equation}\label{5k}
\begin{aligned}
& \quad \  d-1+\frac12 \frac{\log\log\log(1/\lambda)}{\log\log(1/\lambda)} - \frac{\log\log\log\log(1/\lambda)}{\log\log(1/\lambda)}
\\ &<\dim  J_{\rm bd}(E_\lambda)
\leq \dim  J_{\rm r}(E_\lambda)
\leq 1+\frac{\log\log\log(1/\lambda)}{\log\log(1/\lambda)}.
\end{aligned}
\end{equation}
\begin{ack}
This research was initiated while the second author was visiting the University of Kiel and it was 
completed while the first author was visiting the Shanghai Center of Mathematical Sciences.
Both authors thank the respective institutions for the hospitality.

We also thank the referee 
for a large number of helpful remarks, in particular
for suggesting an improvement of the lower bound in Theorem 1.1.
\end{ack}
\section{Preliminaries} \label{prelims}
We collect some results about Zorich maps which can be found in~\cite{Bergweiler2010,Comduehr2019}.
As mentioned above, in~\cite{Bergweiler2010} only the case $d=3$ is treated, but the 
changes to handle the general case are minor. We also note that in~\cite{Bergweiler2010,Comduehr2019}
only the case $\rho=1$ is considered. However, it was noted already in~\cite[Remark~1]{Bergweiler2010}
that one may replace the unit cube by a cube of other sidelength and in fact by a rectangular box.
The reason that we do not restrict to the case $\rho=1$ is that this way the exponential map is
(conjugate  to) a special Zorich map, as described in the introduction.

We will use $|x|$ for the Euclidean norm of a point $x\in\R^d$; that is, we write $|x|=\|x\|_2$.
For $c\in \R$ we define the half-space
\begin{equation}\label{6a}
H_{> c}:=\left\{(x_1,\dots,x_d)\in \R^d\colon x_d> c\right\}.
\end{equation}
The half-spaces $H_{<c}$, $H_{\geq c}$ and $H_{\leq c}$ and the hyperplane  $H_{=c}$
are defined analogously.

First we note that the derivative $DF(x)$ exists almost everywhere and that if 
$DF(x_1,\dots,x_{d-1},0)$ exists, then
\begin{equation}\label{6b}
DF(x_1,\dots,x_d)=e^{x_d}DF(x_1,\dots,x_{d-1},0).
\end{equation}
This implies that there exist $\alpha,m,M\in \R$ with $0<\alpha<1$ and $m<M$ such that
\begin{equation}\label{6c}
|DF(x)|:=\sup_{|h|=1}|DF(x)(h)|\leq\alpha \quad
\mbox{ a.e. for }x\in H_{\leq m}
\end{equation}
while
\begin{equation}\label{6d}
\ell(DF(x)) :=\inf_{|h|=1}|DF(x)(h)|\geq
\frac{1}{\alpha}\quad \mbox{ a.e. for }x\in H_{\geq M}.
\end{equation}
We may and will assume that $M\geq 0$.
It was shown in~\cite{Bergweiler2010} that if
\begin{equation}\label{6e}
a\geq e^M-m,
\end{equation}
then $f_a$ has an attracting fixed point $\xi_a\in  H_{\leq m}$ and the properties mentioned 
in the introduction hold; that is, the complement of the basin of attraction of $\xi_a$ consists
of hairs, the set of endpoints of the hairs has dimension~$d$, and the union of the hairs without
endpoints has dimension~$1$.

The following result can be considered as an analogue of the result of 
Karpi\'nska~\cite[Theorem~2]{Karpinska1999a} that $\dim J_{\rm bd}(E_\lambda)>1$ for $0<\lambda<1/e$.
\begin{theorem}\label{thm4}
If $a$ satisfies~\eqref{6e}, then $\dim  J_{\rm bd}(f_a)>d-1$.
\end{theorem}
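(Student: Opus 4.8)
The plan is to prove $\dim J_{\rm bd}(f_a)>d-1$ by exhibiting a Cantor-type subset of $J_{\rm bd}(f_a)$. I would fix a bounded box $B\subset H_{\ge M}$, high up and centered on the $x_d$-axis, and take $C$ to be the set of points all of whose forward iterates under $f_a$ stay in $B$. Since $B$ is bounded, every orbit in $C$ is bounded; and since $B\subset H_{\ge M}$ while $\xi_a\in H_{\le m}$ with $m<M$, no orbit that returns to $B$ infinitely often can converge to $\xi_a$. Hence $C\subset J_{\rm bd}(f_a)$, and the task reduces to choosing $B$ so that $\dim C>d-1$.

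Concretely, let $B=[-W,W]^{d-1}\times[M,H]$ with $W$ large and $H=\log W+O(1)$. By \eqref{6b} one has $|F(x)|=e^{x_d}$, and from the reflection construction $f_a$ maps each ``upper'' fundamental domain $Q_{\mathbf m}\times\R$ (a reflected copy of $Q\times\R$; this happens for half of the multi-indices $\mathbf m$) essentially onto the half-space $H_{>-a}$. A short computation using \eqref{6e}, and requiring $H\ge M$ together with $e^{2H}\ge dW^{2}+(H+a)^{2}$, shows that for each such $\mathbf m$ with $Q_{\mathbf m}\subset[-W,W]^{d-1}$ the set $f_a(Q_{\mathbf m}\times[M,H])$ contains $B$; so there is an inverse branch $\psi_{\mathbf m}\colon B\to Q_{\mathbf m}\times\R$ with $\psi_{\mathbf m}(B)\subset B$, sitting over $Q_{\mathbf m}$ at heights in $[\log(a+M),H]$. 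Then $C=\bigcap_{n\ge1}\bigcup\psi_{\mathbf m_1}\circ\cdots\circ\psi_{\mathbf m_n}(B)$, the union over all strings of admissible indices. Because $\partial(Q_{\mathbf m}\times\R)$ is mapped by $f_a$ into the hyperplane $\{x_d=-a\}$, from which $B$ stays at distance $\ge M+a$, the cylinders $\psi_{\mathbf m_1}\circ\cdots\circ\psi_{\mathbf m_n}(B)$ are pairwise disjoint, and the uniform quasiconformality of $F$ makes them round with bounded eccentricity and mutual separation a fixed fraction of their diameters.

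For the lower bound I would use \eqref{6b}, \eqref{6c}, \eqref{6d} to see that $\psi_{\mathbf m}$ contracts, at a point lying over a cube $Q_{\mathbf m'}$, by a factor comparable to $1/\max\{a,\operatorname{dist}(Q_{\mathbf m'},0)\}$, while the number of admissible cubes within transverse distance $t$ of the origin is $\asymp t^{d-1}$. Taking $s$ to be the solution of the pressure-type equation $\sum_{\mathbf m}\bigl(c/\max\{a,\operatorname{dist}(Q_{\mathbf m},0)\}\bigr)^{s}=1$ and equipping $C$ with the measure that gives each level-$n$ cylinder mass equal to the product of the $s$-th powers of the contraction factors along it, the mass distribution principle gives $\dim C\ge s$. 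The contribution $\asymp (c/a)^{s}a^{d-1}$ of the central cubes together with the logarithmically divergent tail $\sum_{a\le t\le W}t^{d-2-s}$ forces $s>d-1$; writing $\varepsilon=s-(d-1)$, the equation becomes $\varepsilon\,a^{\varepsilon}\asymp1$ to leading order, so $\varepsilon$ is positive for every admissible $a$, and $\varepsilon\sim\tfrac{\log\log a}{\log a}$ as $a\to\infty$ (a more careful accounting, tracking the hemisphere geometry near the north pole, refines the constant to yield the lower bound in Theorem~\ref{thm3}).

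The step I expect to be the main obstacle is the non-conformality. Since $F$ is only quasiregular, the branches $\psi_{\mathbf m}$ are not similarities, so $\dim C$ is not given by a clean Moran equation: one must keep the cylinders from degenerating under composition (using uniform quasiconformality and the placement of $B$ away from the equatorial hyperplane $\{x_d=0\}$), verify the separation of siblings, and---most importantly---feed the non-uniform expansion, which is of order $1/a$ near the central axis but grows like the transverse distance farther out, into the pressure relation rather than bounding it crudely by the maximal expansion $\asymp W$, which would only give $\dim C\ge d-1$. One also uses the freedom to take $W$ arbitrarily large to make the argument work for every $a$ satisfying \eqref{6e}, not only for large $a$.
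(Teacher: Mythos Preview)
Your approach is essentially the paper's: both build a Cantor-type subset of $J_{\rm bd}(f_a)$ as the limit set of an iterated function system of inverse branches $\Lambda^r$ acting on a large bounded region in $H_{\ge M}$, and both exploit the logarithmic divergence of $\sum_{|r|\le N}(|r|^2+(a/\rho)^2)^{-(d-1)/2}$ (Lemma~\ref{lemma3a}, inequality~\eqref{3c2}) to push the dimension strictly above $d-1$.

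The one technical difference is how the position-dependent contraction---exactly the obstacle you flag at the end---is handled. The paper does not use a mass-distribution or pressure argument. Instead it forms the IFS from \emph{second} iterates $\Lambda^s\circ\Lambda^r$ on $K=B(-\overline{a},R)\cap H_{\ge M}$ with $R=8\rho N$. Since $\Lambda^r(K)\subset A^r$, a thin slab over $P(r)$, the contraction of the outer map $\Lambda^s$ is controlled by the \emph{index} $r$ rather than by an arbitrary point of $K$: one obtains the uniform lower bound $b_{r,s}=c_3^2/\bigl(2\sqrt{2}\,R\sqrt{\rho^2|r|^2+L^2}\bigr)$, independent of~$s$. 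Summing $b_{r,s}^{\,d-1}$ first over $s$ yields a factor $\asymp N^{d-1}$ that exactly cancels $R^{-(d-1)}$, and the remaining sum over $r$ is $\asymp\log(N\rho/L)\to\infty$ as $N\to\infty$. This allows a direct appeal to Falconer's lemma (Lemma~\ref{lemma4}), with no bounded-distortion estimates and no thermodynamic formalism; the quasiregularity enters only through the two-sided derivative bounds of Lemma~\ref{lemma1}. Your single-branch pressure formulation is morally the same computation, but making it rigorous would require precisely the distortion and separation work you anticipate; the second-iterate device sidesteps it. (It also explains the factor $\tfrac12$ in the lower bound of Theorem~\ref{thm3}: the crude bound $c_3/R$ on the inner iterate costs one power of $N$, so the relevant quantity becomes $(Nb)^{d-1-t}$ rather than $b^{d-1-t}$.)
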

Theorem~\ref{thm4} will be proved in Section~\ref{lowerbound} together with the lower bound in
Theorem~\ref{thm3}.
\begin{remark}
Urba\'nski and Zdunik~\cite[Corollary~7.3]{Urbanski2003} showed that~\eqref{5e0} implies that
$\dim J_{\rm r}(E_\lambda)<2$ whenever $0<\lambda<1/e$ (and in fact whenever $E_\lambda$ has an 
attracting fixed point). The proof uses that if two functions in the exponential family both have an attracting
fixed point, then they are quasiconformally conjugate. This argument is not available in the
higher-dimensional setting.
We do not know whether  $\dim  J_{\rm r}(f_a)<d$ whenever $a$ satisfies~\eqref{6e}.
\end{remark}

For $r=(r_1,\dots,r_{d-1})\in \Z^{d-1}$ we put
\begin{equation}\label{6f}
P(r):= \left\{(x_1,\dots,x_{d-1})\in \R^{d-1}\colon |x_j-2\rho r_j|<\rho \text{ for } 1\leq j\leq d-1\right\}
\end{equation}
so that $P(0)$ is the interior of~$Q$.
Let
\begin{equation}\label{6g}
S:=\left\{ r\in\Z^{d-1}\colon \sum_{j=1}^{d-1} r_j \text{ is even}\right\}.
\end{equation}
Then $F$ maps $P(r)\times \R$ onto $H_{>0}$ if $r\in S$ and onto $H_{<0}$ if $r\in \Z^{d-1}\setminus S$.
Thus $f_a$ maps $P(r)\times \R$ onto $H_{>-a}$ if $r\in S$ and onto $H_{<-a}$ if $r\in \Z^{d-1}\setminus S$.
For $r\in S$ we put
\begin{equation}\label{6h}
T(r) := P(r)\times (M,\infty).
\end{equation}
A short computation (see~\cite[(2.1)]{Bergweiler2010} or~\cite[(2.2)]{Comduehr2019}) shows 
that $f_a(T(r))\supset H_{\geq M}$. Thus there exists a branch 
$\Lambda^r\colon H_{\geq M}\to T(r)$ of the inverse function of~$f_a$.
With $\Lambda:=\Lambda^{(0,\dots,0)}$
we have
\begin{equation}\label{6i}
\Lambda^{(r_1,\dots,r_{d-1})}(x)=\Lambda(x)+(2\rho r_1,\dots,2\rho r_{d-1},0)
\end{equation}
for all $x\in H_{\geq M}$ and all $r\in S$.
We have
\begin{equation} \label{2a}
D\Lambda(x)=Df_a(\Lambda(x))^{-1} =DF(\Lambda(x))^{-1}
\quad \text{ a.e. for } x\in H_{\geq M}.
\end{equation}
It thus follows from~\eqref{6d} that 
\begin{equation} \label{2a1}
|D\Lambda(x)|\leq\alpha
\quad \text{ a.e. for } x\in H_{\geq M}.
\end{equation}
This implies that
\begin{equation}\label{2b}
\left|\Lambda(x)-\Lambda(y)\right|
\leq \alpha |x-y|
\quad\text{for } x,y\in H_{\geq M}.
\end{equation}
Noting that $Df_a(x)=DF(x)$ we deduce from~\eqref{6b} 
that there exist positive constants $c_1$ and~$c_2$ such that
\begin{equation}\label{2c}c_1e^{x_d}\leq\ell\!\left(Df_a(x)\right)\leq|Df_a(x)|\leq c_2e^{x_d}
\quad\text{a.e.}
\end{equation}
It was shown in~\cite{Bergweiler2010,Comduehr2019} that there
exists positive constants $c_3$ and~$c_4$ such that
\begin{equation}\label{2f}
\frac{c_3}{|x|}
\leq\ell\!\left(D\Lambda(x)\right)\leq
\left|D\Lambda(x)\right|\leq\frac{c_4}{|x|}
\quad \text{ a.e. for } x\in H_{\geq M}
\end{equation}
and this was used to prove that
\begin{equation}\label{2h}
\left|\Lambda(x)-\Lambda(y)\right|
\leq c_4\pi\frac{|x-y|}{\min\{|x|,|y|\}} .
\end{equation}

We have to consider how the bounds for $\ell\!\left(D\Lambda(x)\right)$ and $\left|D\Lambda(x)\right|$
in~\eqref{2f} depend on~$a$.
We will write $\overline{a}=(0,\dots,0,a)$  so that $f_a(x)=F(x)-\overline{a}$.
\begin{lemma} \label{lemma1}
There exist constants $c_3$ and $c_4$ depending only on $F$ such that if
$a$ satisfies~\eqref{6e}, then
\begin{equation}\label{2f1}
\frac{c_3}{|x+\overline{a}|}
\leq\ell\!\left(D\Lambda(x)\right)\leq
\left|D\Lambda(x)\right|\leq\frac{c_4}{|x+\overline{a}|}
\quad \text{ a.e. for } x\in H_{\geq M}.
\end{equation}
\end{lemma}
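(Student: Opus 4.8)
The plan is to read the bounds off from the chain rule \eqref{2a} for $\Lambda$ together with the pointwise estimate \eqref{2c} for $Df_a$; the only new point beyond what is already recorded in \eqref{2f} is to identify the relevant exponential factor precisely and to make explicit that the resulting constants do not depend on~$a$.

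First I would recall the elementary fact that for an invertible linear map $A\colon\R^d\to\R^d$ one has $\left|A^{-1}\right|=1/\ell(A)$ and $\ell\!\left(A^{-1}\right)=1/|A|$, the singular values of $A^{-1}$ being the reciprocals of those of~$A$. Taking $A=Df_a(\Lambda(x))$ and using the identity $D\Lambda(x)=Df_a(\Lambda(x))^{-1}$ from \eqref{2a}, this gives
$$\ell\!\left(D\Lambda(x)\right)=\frac{1}{\left|Df_a(\Lambda(x))\right|},\qquad\left|D\Lambda(x)\right|=\frac{1}{\ell\!\left(Df_a(\Lambda(x))\right)}$$
for a.e.\ $x\in H_{\geq M}$. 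Here one should note that \eqref{2a} and \eqref{2c} each hold only almost everywhere; but $f_a$ is locally Lipschitz and hence maps null sets to null sets, and the inverse of $\Lambda$ is the restriction of $f_a$ to $T(0)$, so the exceptional set for \eqref{2c} pulls back under $\Lambda$ to a null set, whence the two displayed identities together with \eqref{2c} evaluated at $\Lambda(x)$ are valid for a.e.\ $x\in H_{\geq M}$.

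The heart of the matter is then to compute $(\Lambda(x))_d$. Since $\Lambda(x)\in T(0)=P(0)\times(M,\infty)$ and $P(0)$ is the interior of~$Q$, the point $\Lambda(x)$ lies in the fundamental domain $Q\times\R$ on which the explicit formula \eqref{1a} defines~$F$; and from $f_a(\Lambda(x))=x$ and $f_a=F-\overline{a}$ we get $F(\Lambda(x))=x+\overline{a}$. Writing $\Lambda(x)=(y_1,\dots,y_d)$ this reads $e^{y_d}h(y_1,\dots,y_{d-1})=x+\overline{a}$, and because $h$ takes values in the unit hemisphere~$U$, so that $\|h(y_1,\dots,y_{d-1})\|_2=1$, taking Euclidean norms yields
$$e^{(\Lambda(x))_d}=\left|x+\overline{a}\right|.$$

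Finally I would substitute this into \eqref{2c} taken at the point $\Lambda(x)$, which becomes $c_1\left|x+\overline{a}\right|\leq\ell\!\left(Df_a(\Lambda(x))\right)\leq\left|Df_a(\Lambda(x))\right|\leq c_2\left|x+\overline{a}\right|$, and combine it with the two reciprocal identities from the second step to obtain \eqref{2f1} with $c_3=1/c_2$ and $c_4=1/c_1$. Since $c_1$ and $c_2$ arise, via \eqref{6b} and \eqref{2c}, only from the bi-Lipschitz constants of~$h$ and are in particular independent of~$a$, the same is true of $c_3$ and~$c_4$; and the hypothesis \eqref{6e} enters only to guarantee that the inverse branch $\Lambda\colon H_{\geq M}\to T(0)$ exists at all. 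The only step requiring genuine care is the identification of the exponential factor, where one must be sure to be inside the fundamental domain for \eqref{1a} to apply and to handle the almost-everywhere transfer across~$\Lambda$; the remainder is bookkeeping.
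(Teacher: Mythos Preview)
Your proof is correct and follows essentially the same route as the paper's: both invert the chain rule \eqref{2a}, invoke \eqref{2c} at the point $\Lambda(x)$, and identify $e^{(\Lambda(x))_d}=|F(\Lambda(x))|=|x+\overline{a}|$ via \eqref{1a} and~\eqref{1b}. Your version is somewhat more explicit about the singular-value identities and the almost-everywhere transfer, but the argument is the same.
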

\begin{proof}
By~\eqref{2a}, \eqref{2c}, \eqref{1a} and~\eqref{1b} we have
\begin{equation}\label{2f2}
\begin{aligned}
\left|D\Lambda(x)\right|
&\leq
\frac{1}{\ell\!\left(DF(\Lambda(x))\right)}
\leq \frac{1}{c_1 \exp \!\left(\Lambda_d(x)\right)}
\\ &
= \frac{1}{c_1|F(\Lambda(x))|}
= \frac{1}{c_1|f_a(\Lambda(x))+\overline{a}|}
= \frac{1}{c_1|x+\overline{a}|}.
\end{aligned}
\end{equation}
The proof of the lower bound for $\ell(D\Lambda(x))$ is similar.
\end{proof}
Lemma~\ref{lemma1} implies  that~\eqref{2h} can be improved to
\begin{equation}\label{2h1}
\left|\Lambda(x)-\Lambda(y)\right|
\leq c_4\pi\frac{|x-y|}{\min\{|x+\overline{a}|,|y+\overline{a}|\}}
\end{equation}
for $x,y\in H_{\geq M}$.

\section{Proof of the upper bound in Theorem~\ref{thm3}} \label{upperbound}
For $r\in S$ and $A\subset \R^d$, we will use the notation
\begin{equation}\label{3a1}
\begin{aligned}
A^r
&:=(2\rho r_1,\dots,2\rho r_{d-1},0)+A
\\ &
=\{(2\rho r_1+x_1,\dots,2\rho r_{d-1}+x_{d-1},x_d)\colon x\in A\}.
\end{aligned}
\end{equation}
We also write $B(x,R)$ for the closed ball of radius $R$ around a point $x\in\R^d$.
We note that since $M>m$ it follows from~\eqref{6e} that $a>1$.
\begin{lemma} \label{lemma2}
Let $r,s\in S$ and let $A\subset T(s)$ be bounded. If $a$ satisfies~\eqref{6e}, then
\begin{equation}\label{2h2}
\diam \Lambda(A^r) \leq c_4\pi\frac{\diam A}{\sqrt{\rho^2|r+s|^2+a^2}}.
\end{equation}
\end{lemma}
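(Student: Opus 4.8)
The plan is to reduce the estimate to the distortion bound~\eqref{2h1} of Lemma~\ref{lemma1}. First I would check that $\Lambda$ is defined on $A^r$: since $A\subset T(s)=P(s)\times(M,\infty)$, every point of $A$ has last coordinate larger than $M$, and forming $A^r$ only shifts the first $d-1$ coordinates, so $A^r\subset H_{>M}\subset H_{\geq M}$. As translations are isometries we also have $\diam A^r=\diam A$. Hence for any $x,y\in A^r$ we may apply~\eqref{2h1} to obtain
\[
|\Lambda(x)-\Lambda(y)|\leq c_4\pi\,\frac{|x-y|}{\min\{|x+\overline a|,|y+\overline a|\}}\leq c_4\pi\,\frac{\diam A}{\min\{|x+\overline a|,|y+\overline a|\}},
\]
so the lemma will follow, after taking the supremum over $x,y\in A^r$, once I show that $|x+\overline a|\geq\sqrt{\rho^2|r+s|^2+a^2}$ for every $x\in A^r$.

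For that lower bound I would argue coordinatewise. A point of $A^r$ has the form $x=(2\rho r_1+x_1',\dots,2\rho r_{d-1}+x_{d-1}',x_d')$ with $(x_1',\dots,x_d')\in A\subset T(s)$, so $|x_j'-2\rho s_j|<\rho$ for $1\leq j\leq d-1$ and $x_d'>M\geq 0$. Writing $t_j:=r_j+s_j\in\Z$, the $j$-th coordinate $2\rho r_j+x_j'$ differs from $2\rho t_j$ by less than $\rho$, hence lies in the interval $(2\rho t_j-\rho,\,2\rho t_j+\rho)$. If $t_j=0$ this only gives $(2\rho r_j+x_j')^2\geq 0=\rho^2 t_j^2$, but for $t_j\neq 0$ that interval is disjoint from $(-\rho|t_j|,\rho|t_j|)$, since $2|t_j|-1\geq|t_j|$ whenever $|t_j|\geq 1$; thus again $(2\rho r_j+x_j')^2\geq\rho^2 t_j^2$. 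Summing over $j$ yields $\sum_{j=1}^{d-1}(2\rho r_j+x_j')^2\geq\rho^2|r+s|^2$, and since $x_d'+a>a>0$ (recall $a>1$) we get $|x+\overline a|^2\geq\rho^2|r+s|^2+a^2$, which is the desired bound.

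I do not expect a genuine obstacle here: the statement is essentially a bookkeeping consequence of~\eqref{2h1}, the only point needing attention being the elementary observation $2|t_j|-1\geq|t_j|$ for integers $|t_j|\geq1$. This simply records that translating $A$ by $(2\rho r_1,\dots,2\rho r_{d-1},0)$ moves it into the slab over the box $P(r+s)$, whose horizontal distance from the vertical axis produces the term $\rho|r+s|$ under the square root, while the height condition $x_d'>M\geq 0$ produces the term $a$. Note also that the hypothesis $r,s\in S$ is not really used; only $r+s\in\Z^{d-1}$ enters the argument.
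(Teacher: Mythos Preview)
Your proposal is correct and follows essentially the same route as the paper: observe that $A^r\subset T(r+s)\subset H_{\geq M}$, bound $|x_j|\geq \rho|r_j+s_j|$ coordinatewise (the paper writes this as $|x_j|\geq\max\{2\rho|r_j+s_j|-\rho,0\}\geq\rho|r_j+s_j|$) and $(x_d+a)^2\geq a^2$ using $x_d>M\geq 0$, and then apply~\eqref{2h1}. Your version is just a more detailed unpacking of the same computation; the remark that the hypothesis $r,s\in S$ is not actually used is a valid side observation.
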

\begin{proof}
Since $A^r\subset T(r+s)$ we find that if $x=(x_1,\dots,x_d)\in A^r$,
then $|x_j|\geq \max\{2\rho|r_j+s_j|-\rho,0\}\geq \rho|r_j+s_j|$ for $1\leq j\leq d-1$ while
$x_d\geq M$. Thus, recalling that $M\geq 0$, we find that
\begin{equation}\label{3b}
\begin{aligned}
|x+\overline{a}|
&=\sqrt{\sum_{j=1}^{d-1} x_j^2+(x_d+a)^2}
\\ &
\geq
\sqrt{\sum_{j=1}^{d-1} \rho^2(r_j+s_j)^2+a^2}
= \sqrt{\rho^2|r+s|^2+a^2}.
\end{aligned}
\end{equation}
The conclusion now follows from~\eqref{2h1}.
\end{proof}

\begin{lemma} \label{lemma3a}
There exist positive constants $c_5$ and $c_6$ depending only on $d$
such that if $d-1<t\leq d$ and
$N\geq b\geq 3\sqrt{d-1}$, then
\begin{equation}\label{3c1}
c_5 \frac{b^{d-1-t}}{t-d+1}\left( 1-\left(\frac{N}{b}\right)^{d-1-t}\right) 
\leq
\sum_{\substack{r\in S\\ |r|\leq N}} \frac{1}{(|r|^2+b^2)^{t/2}}
\leq
c_6 \frac{b^{d-1-t}}{t-d+1}.
\end{equation}
Moreover,
\begin{equation}\label{3c2}
\sum_{\substack{r\in S\\ |r|\leq N}} \frac{1}{(|r|^2+b^2)^{(d-1)/2}}
\geq
c_5 \log\frac{N}{b} .
\end{equation}
\end{lemma}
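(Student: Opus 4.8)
The plan is to estimate the sum by comparison with an integral over $\R^{d-1}$, treating the lattice $S$ (the even sublattice of $\Z^{d-1}$) as carrying a roughly uniform density of points. Since $S$ has density $\tfrac12$ in $\Z^{d-1}$, and the summand $g(r):=(|r|^2+b^2)^{-t/2}$ is radially decreasing in $|r|$, a standard packing argument shows that $\sum_{r\in S,\,|r|\le N} g(r)$ is comparable, up to constants depending only on $d$, to $\int_{|y|\le N+C} g(y)\,dy$ for $y\in\R^{d-1}$, with a matching lower bound $\int_{|y|\le N-C} g(y)\,dy$; the hypothesis $b\ge 3\sqrt{d-1}$ guarantees that $b$ exceeds the lattice spacing so that no single term dominates and the error from the boundary annulus is absorbed into the constants. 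First I would make this comparison precise by covering each cube of the lattice by a ball and vice versa, using monotonicity of $g$ to bound $g$ on a cell by its value at the nearest/farthest corner.

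Next I would compute the radial integral explicitly. Passing to polar coordinates in $\R^{d-1}$,
\begin{equation}\label{3d1}
\int_{|y|\le R} \frac{dy}{(|y|^2+b^2)^{t/2}}
= \omega_{d-2}\int_0^R \frac{\rho^{d-2}}{(\rho^2+b^2)^{t/2}}\,d\rho,
\end{equation}
where $\omega_{d-2}$ is the surface area of the unit sphere in $\R^{d-1}$. The substitution $\rho=bu$ turns this into $\omega_{d-2}\,b^{d-1-t}\int_0^{R/b} u^{d-2}(1+u^2)^{-t/2}\,du$. For $d-1<t\le d$ the integrand $u^{d-2}(1+u^2)^{-t/2}$ behaves like $u^{d-2-t}$ as $u\to\infty$, which is integrable precisely because $t>d-1$; bounding $1+u^2$ between $u^2$ and $2u^2$ for $u\ge 1$ and between $1$ and $2$ for $u\le 1$ gives
\begin{equation}\label{3d2}
\int_0^{R/b} \frac{u^{d-2}\,du}{(1+u^2)^{t/2}}
\asymp \frac{1}{t-d+1}\left(1-\Bigl(\tfrac{R}{b}\Bigr)^{d-1-t}\right)
\end{equation}
for $R\ge b$, with the implied constants depending only on $d$ (the factor $t-d+1$ in the denominator comes from $\int_1^{R/b} u^{d-2-t}\,du = \tfrac{1}{t-d+1}(1-(R/b)^{d-1-t})$, and the contribution of $\int_0^1$ is bounded above and below by constants). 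Tracking this through the lattice-to-integral comparison with $R=N\pm C$ yields \eqref{3c1}; the lower bound's dependence on $N/b$ survives because $(N/b)^{d-1-t}$ is monotone, so shrinking $N$ to $N-C$ only changes it by a $d$-dependent factor.

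For \eqref{3c2} the exponent is the borderline case $t=d-1$, where the integral $\int_0^{R/b} u^{d-2}(1+u^2)^{-(d-1)/2}\,du \asymp \int_1^{R/b} u^{-1}\,du = \log(R/b)$ diverges logarithmically; taking $R=N-C$ and noting that $b^{d-1-t}=b^0=1$ gives a lower bound of the form $c_5\log(N/b)$ (for $N$ at least a fixed multiple of $b$, which is exactly the regime where the statement is non-vacuous since otherwise the right side is negative and the inequality is trivial). The main obstacle, and the only point requiring care, is keeping all constants genuinely independent of $t$ throughout the range $d-1<t\le d$ — in particular isolating the single factor $1/(t-d+1)$ that blows up as $t\downarrow d-1$ and verifying that the lattice-comparison constants, which involve only the geometry of $S$ in $\R^{d-1}$, do not secretly depend on $t$. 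This is handled by performing the $t$-uniform estimates $1\le (1+u^2)/\max\{1,u^2\}\le 2$ at the level of the integrand before integrating, so that every $t$-dependent quantity is either the explicit power $b^{d-1-t}$, the explicit factor $(N/b)^{d-1-t}$, or the explicit factor $1/(t-d+1)$.
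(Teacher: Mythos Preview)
Your approach is correct and is essentially the same as the paper's: compare the lattice sum to the radial integral in $\R^{d-1}$ via a cube-packing argument, then evaluate the integral after the substitution $u=\rho/b$. The paper carries out the packing step concretely by associating to each $r\in S$ a small cube $Q_r$ of sidelength~$2$ for the upper bound and a larger cube $P_r$ of sidelength~$6$ for the lower bound (the larger sidelength being exactly what allows the $P_r$, $r\in S$, to cover $\R^{d-1}$ despite $S$ having density $\tfrac12$), but your description of the comparison and the subsequent integral estimates matches the paper's argument in all essentials.
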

\begin{proof}
For $r=(r_1,\dots,r_{d-1})\in\Z^{d-1}$ let $Q_r$ be the cube with vertices at the points
$(2r_1+e_1,\dots,2r_{d-1}+e_{d-1})$, where $e_j\in\{-1,1\}$ for all~$j$.
For $x\in Q_r$ and $b\geq \sqrt{d-1}/7$ we then have
\begin{equation}\label{3d}
|x|^2\leq  \sum_{j=1}^{d-1} (2|r_j|+1)^2 \leq
 \sum_{j=1}^{d-1} (8r_j^2+1)=8|r|^2+d-1 
\leq 8|r|^2+7b^2
\end{equation}
and thus 
$|x|^2+b^2\leq 8|r|^2+8b^2$.
Hence
\begin{equation}\label{3e}
\int_{Q_r} \frac{dx_1\dots dx_{d-1}}{(|x|^2+b^2)^{t/2}}
\geq \int_{Q_r} \frac{dx_1\dots dx_{d-1}}{(8|r|^2+8b^2)^{t/2}}
= \frac{2^{d-1} 8^{-t/2} }{(|r|^2+b^2)^{t/2}}
= \frac{2^{d-1-3t/2} }{(|r|^2+b^2)^{t/2}} .
\end{equation}
For $|r|\leq N$ we have $Q_r\subset B(0,2N+\sqrt{d-1})\subset B(0,2N+d)$ and thus
\begin{equation}\label{3f}
\begin{aligned}
\sum_{\substack{r\in S\\ |r|\leq N}} \frac{1}{( |r|^2+b^2)^{t/2}}
& \leq
2^{3t/2-d+1} \sum_{\substack{r\in S\\ |r|\leq N}} \int_{Q_r} \frac{dx_1\dots dx_{d-1}}{(|x|^2+b^2)^{t/2}}
\\ &
\leq 
2^{3t/2-d+1} \int_{B(0,2N+d)} \frac{dx_1\dots dx_{d-1}}{(|x|^2+b^2)^{t/2}}
\\ &
=2^{3t/2-d+1} \frac{2\pi^{(d-1)/2}}{\Gamma\!\left( \frac{d-1}{2}\right)}
\int_0^{2N+d} \frac{u^{d-2}}{(u^2+b^2)^{t/2}}du.
\end{aligned}
\end{equation}
Furthermore,
\begin{equation}\label{3g}
\begin{aligned}
\int_0^{2N+d} \frac{u^{d-2}}{(u^2+b^2)^{t/2}}du
&=
b^{d-1-t}\int_0^{(2N+d)/b } \frac{v^{d-2}}{(v^2+1)^{t/2}}dv
\\ &
\leq 
b^{d-1-t}\left( 1 + \int_1^{\infty}  v^{d-2-t}dv\right)
\\ &
=
b^{d-1-t}\left( 1 + \frac{1}{t-d+1} \right)
\leq 
2 \frac{b^{d-1-t}}{t-d+1} .
\end{aligned}
\end{equation}
The right inequality in~\eqref{3c1} now follows from~\eqref{3f} and~\eqref{3g}.

To prove the left inequality, we proceed similarly.
For $r\in S$ let $P_r$ be the cube with vertices at the points
$(2r_1+3e_1,\dots,2r_{d-1}+3e_{d-1})$, where $e_j\in\{-1,1\}$ for all~$j$.
Noting that $(2y-3)^2\geq y^2-3$ for $y\in\R$ we find that if 
$x\in P_r$ and $b\geq 3\sqrt{d-1}$, then 
\begin{equation}\label{3d1}
|x|^2\geq  \sum_{j=1}^{d-1} (2|r_j|-3)^2 \geq \sum_{j=1}^{d-1} (|r_j|^2-3) =
|r|^2-3(d-1)
\geq |r|^2-\frac12 b^2
\end{equation}
and thus $|x|^2+b^2\geq \frac12 |r|^2+\frac12 b^2$.
Hence
\begin{equation}\label{3e1}
\int_{P_r} \frac{dx_1\dots dx_{d-1}}{(|x|^2+b^2)^{t/2}}
\leq \int_{P_r} \frac{dx_1\dots dx_{d-1}}{(\frac12|r|^2+\frac12 b^2)^{t/2}}
= \frac{6^{d-1} 2^{t/2} }{(|r|^2+b^2)^{t/2}}.
\end{equation}
For $x\in\R^{d-1}$ we can choose $r\in S$ such that $x\in P_r$.
In fact, we first choose $r\in \Z^{d-1}$ such that $|x_j-2r_j|\leq 1$ for all $j$, and in 
order to achieve that $\sum_{j=1}^{d-1} r_j$ is even we replace $r_1$ by $r_1+1$ if necessary.
For $x\in P_r$ we have $|r_j|\leq (|x_j|+3)/2$ and since $N\geq 3\sqrt{d-1}$ this implies that
\begin{equation}\label{3e2}
\begin{aligned}
|r|^2
&=\sum_{j=1}^{d-1}r_j^2\leq \frac14 \sum_{j=1}^{d-1}(|x_j|+3)^2 \leq \frac14 \sum_{j=1}^{d-1}2(x_j^2+9)
\\ &
=\frac12 |x|^2 +\frac92(d-1)  \leq \frac12 |x|^2+\frac12 N^2.
\end{aligned}
\end{equation}
For $|x|\leq N$ we thus have $|r|\leq N$ so that 
\begin{equation}\label{3e3}
\bigcup_{\substack{r\in S\\ |r|\leq N}} P_r\supset B(0,N).
\end{equation}
Instead of~\eqref{3f} we now obtain
\begin{equation}\label{3f1}
\begin{aligned}
\sum_{\substack{r\in S\\ |r|\leq N}} \frac{1}{( |r|^2+b^2)^{t/2}}
& \geq
6^{1-d} 2^{-t/2} \sum_{\substack{r\in S\\ |r|\leq N}}
\int_{P_r} \frac{dx_1\dots dx_{d-1}}{(|x|^2+b^2)^{t/2}}
\\ &
\geq
6^{1-d} 2^{-t/2} \int_{B(0,N)} \frac{dx_1\dots dx_{d-1}}{(|x|^2+b^2)^{t/2}}
\\ &
=
6^{1-d} 2^{-t/2} \frac{2\pi^{(d-1)/2}}{\Gamma\!\left( \frac{d-1}{2}\right)}
\int_0^{N} \frac{u^{d-2}}{(u^2+b^2)^{t/2}}du
\end{aligned}
\end{equation}
and instead of~\eqref{3g} we have
\begin{equation}\label{3g1}
\begin{aligned}
\int_0^{N} \frac{u^{d-2}}{(u^2+b^2)^{t/2}}du
&=
b^{d-1-t}\int_0^{N/b } \frac{v^{d-2}}{(v^2+1)^{t/2}}dv
\\ &
\geq
b^{d-1-t}2^{-t/2}\int_1^{N/b}  v^{d-2-t}dv
\\ &
=
\frac{b^{d-1-t}}{t-d+1}2^{-t/2}\left( 1-\left(\frac{N}{b}\right)^{d-1-t}\right) .
\end{aligned}
\end{equation}
The left inequality in~\eqref{3c1} now follows from~\eqref{3f1} and~\eqref{3g1}.

Finally, to prove~\eqref{3c2} we only have to note that for $t=d-1$ we obtain
\begin{equation}\label{3g2}
\begin{aligned}
\int_0^{N} \frac{u^{d-2}}{(u^2+b^2)^{(d-1)/2}}du
&=
\int_0^{N/b } \frac{v^{d-2}}{(v^2+1)^{(d-1)/2}}dv
\\ &
\geq 2^{(1-d)/2}\int_1^{N/b}  \frac{dv}{v}
= 2^{(1-d)/2}\log\frac{N}{b}
\end{aligned}
\end{equation}
instead of~\eqref{3g1}.
\end{proof}

\begin{lemma} \label{lemma3}
There exists a constant $c_7$, depending only on $F$, such that if $a$ satisfies~\eqref{6e},
$d-1<t\leq d$, $s\in S$ and $A\subset T(s)$ is bounded, then
\begin{equation}\label{2h3}
\sum_{r\in S}(\diam \Lambda(A^r))^t\leq   c_7\frac{a^{d-1-t}}{t-d+1} (\diam A)^t .
\end{equation}
\end{lemma}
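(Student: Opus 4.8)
The plan is to combine the single-piece estimate of Lemma~\ref{lemma2} with the summation estimate of Lemma~\ref{lemma3a}; after that the proof is essentially a substitution, a reindexing and a rescaling.

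First I would raise the inequality of Lemma~\ref{lemma2} to the power $t$, obtaining
\begin{equation*}
(\diam \Lambda(A^r))^t\leq (c_4\pi)^t\,\frac{(\diam A)^t}{(\rho^2|r+s|^2+a^2)^{t/2}}
\qquad (r\in S),
\end{equation*}
and sum over $r\in S$. Since $s\in S$, the translation $r\mapsto r+s$ permutes $S$, because the coordinate sum of $r+s$ is a sum of two even integers; writing $q=r+s$ and pulling out a factor $\rho^{-t}$ I would get
\begin{align*}
\sum_{r\in S}(\diam \Lambda(A^r))^t
&\leq (c_4\pi)^t(\diam A)^t\sum_{q\in S}\frac{1}{(\rho^2|q|^2+a^2)^{t/2}}\\
&=(c_4\pi)^t(\diam A)^t\,\rho^{-t}\sum_{q\in S}\frac{1}{(|q|^2+(a/\rho)^2)^{t/2}}.
\end{align*}
Now I would apply the right-hand inequality in~\eqref{3c1} with $b=a/\rho$ and let $N\to\infty$. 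This is legitimate: the upper bound in~\eqref{3c1} does not depend on $N$ while the partial sums increase, and~\eqref{6e} (possibly after a harmless enlargement of the threshold on $a$, or using that the upper bound in~\eqref{3c1} already holds when $b\geq\sqrt{d-1}/7$) ensures $b\geq 3\sqrt{d-1}$. This gives
\begin{equation*}
\sum_{q\in S}\frac{1}{(|q|^2+(a/\rho)^2)^{t/2}}\leq c_6\,\frac{(a/\rho)^{d-1-t}}{t-d+1},
\end{equation*}
and since $\rho^{-t}(a/\rho)^{d-1-t}=\rho^{-(d-1)}a^{d-1-t}$, combining the last three displays yields
\begin{equation*}
\sum_{r\in S}(\diam \Lambda(A^r))^t\leq c_6(c_4\pi)^t\rho^{-(d-1)}\,\frac{a^{d-1-t}}{t-d+1}\,(\diam A)^t.
\end{equation*}

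It remains only to absorb the dependence on $t$ into a single constant: since $d-1<t\leq d$ we have $(c_4\pi)^t\leq\max\{(c_4\pi)^{d-1},(c_4\pi)^d\}$, so~\eqref{2h3} holds with $c_7:=c_6\rho^{-(d-1)}\max\{(c_4\pi)^{d-1},(c_4\pi)^d\}$, which depends only on $F$ because $d$, $\rho$, $c_4$ and $c_6$ do. I expect the only points requiring care to be the bookkeeping ones: that $r\mapsto r+s$ really is a bijection of $S$, that the hypotheses of Lemma~\ref{lemma3a} transfer to $b=a/\rho$ under~\eqref{6e}, and that the constant can be taken uniform in $t\in(d-1,d]$. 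There is no genuine analytic difficulty beyond this: the decay $1/|x+\overline{a}|$ of $|D\Lambda|$, already encoded in Lemma~\ref{lemma2}, does all the work, and Lemma~\ref{lemma3a} supplies the integral comparison that produces the factor $a^{d-1-t}/(t-d+1)$.
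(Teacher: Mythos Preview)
Your proposal is correct and follows essentially the same route as the paper: apply Lemma~\ref{lemma2}, reduce the sum to $\sum_{q\in S}(|q|^2+(a/\rho)^2)^{-t/2}$, and invoke the upper bound of Lemma~\ref{lemma3a} with $N\to\infty$. The only cosmetic difference is that the paper replaces your reindexing $r\mapsto r+s$ by the assumption $s=0$ without loss of generality; you are in fact slightly more careful than the paper in flagging the hypothesis $b\ge 3\sqrt{d-1}$ and in making $c_7$ explicitly uniform over $t\in(d-1,d]$.
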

\begin{proof}
Without loss of generality we may assume that $s=0$.
Applying Lemma~\ref{lemma2} we obtain  
\begin{equation}\label{3c}
\begin{aligned}
\sum_{r\in S}(\diam \Lambda(A^r))^t
&\leq
\sum_{r\in S} \frac{(c_4\pi \diam A)^t}{( \rho^2|r|^2+a^2)^{t/2}}
\\ &
\leq
\left(\frac{c_4\pi \diam A}{\rho}\right)^t \sum_{r\in S} \frac{1}{( |r|^2+(a/\rho)^2)^{t/2}}.
\end{aligned}
\end{equation}
We note that the upper bound in~\eqref{3c1} does not depend on~$N$.
Thus we may take the limit as $N\to\infty$ there, which together with~\eqref{3c} yields that
\begin{equation}\label{3c0}
\begin{aligned}
\sum_{r\in S}(\diam \Lambda(A^r))^t 
&\leq
\left(\frac{c_4\pi }{\rho}\right)^t 
\frac{c_6}{t-d+1}\left(\frac{a}{\rho}\right)^{d-1-t}(\diam A)^t 
\\ &
= \frac{c_6 (c_4\pi)^t }{\rho^{d-1}} \frac{a^{d-1-t}}{t-d+1} (\diam A)^t.
\end{aligned}
\end{equation}
The conclusion follows.
\end{proof}
\begin{proof}[Proof of the upper bound in Theorem~\ref{thm3}]
Let $J(f_a):=\{x\colon f_a^n(x)\not\to \xi_a\}$ be the complement of the attracting basin of~$\xi_a$.
For $R>M$ we consider the set
\begin{equation}\label{3h}
K(R):=\left\{x\in J(f_a)\cap B(0,R)\colon \liminf_{k\to\infty}|f_a^k(x)|< R\right\}.
\end{equation}
Let $c_7$ be the constant from Lemma~\ref{lemma3}.
We show that if $a$ is sufficiently large and $d-1<t\leq d$ such that $\tau:=c_7 a^{d-1-t}/(t-d+1)<1$,
then $\dim K(R)\leq t$.
This implies that $\dim J_{\rm r}(f_a)\leq t$, since
$J_{\rm r}(f_a)=\bigcup_{n\in\N} K(R_n)$ for any sequence $(R_n)$ which tends to~$\infty$.
The conclusion follows from this, since for $t=d-1+\log\log a/\log a$ we have 
$a^{d-1-t}/(t-d+1)=1/\log\log a$.

For $s\in S$, $A\subset T(s)$ and $n\in\N$,  let $X_n(A)$ denote the set of all components
of $f^{-n}(A)$ which are contained in $T(0)$.
If $U\in X_{n}(A)$, then $f(U)$ has the form $f(U)=V^r$ for some $V\in X_{n-1}(A)$ and
some $r\in S$. Equivalently, $U=\Lambda(V^r)$. In turn, if $V\in X_{n-1}(A)$ and
$r\in S$, then $\Lambda(V^r)\in X_n(A)$. Together with Lemma~\ref{lemma3} we thus find that
\begin{equation}\label{3i}
\sum_{U\in X_n(A)}(\diam U)^t
=
\sum_{V\in X_{n-1}(A)} \; \sum_{r\in S}(\diam \Lambda(V^r))^t
\leq \tau
\sum_{V\in X_{n-1}(A)} (\diam V)^t.
\end{equation}
Induction yields that
\begin{equation}\label{3j}
\begin{aligned}
\sum_{U\in X_n(A)}(\diam U)^t
&\leq 
\tau^{n-1} \sum_{V\in X_1(A)}(\diam V)^t
\\ &
=
\tau^{n-1} (\diam \Lambda(A))^t
\leq 
\tau^{n-1} (\diam A)^t.
\end{aligned}
\end{equation}
We will apply this for $A^s:=H_{\leq R}\cap T(s)$.
There exists $N\in\N$ such that
\begin{equation}\label{3l}
J(f_a)\cap B(0,R)\subset \bigcup_{\substack{s\in S\\ |s|\leq N}} A^s.
\end{equation}
Next we put
\begin{equation}\label{3k}
Y_n:=\bigcup_{\substack{s\in S\\ |s|\leq N}}\bigcup_{m\geq n} X_m(A^s)
\quad\text{and}\quad
Z_n:=\{ U^s\colon s\in S, |s|\leq N,\; U\in Y_n\}.
\end{equation}
Then $Y_n$ contains all points $x\in T(0)$ for which there exists
$m\geq n$ and $s\in S$ with $|s|\leq N$ such that $f_a^m(x)\in A^s$.
Hence $Z_n$ contains all $x\in \bigcup_{s\in S,|s|\leq N}T(s)$ for which there exists
$m\geq n$ such that $f_a^m(x)\in \bigcup_{s\in S,|s|\leq N}A^s$.
In particular, $Z_n$ contains all $x\in J(f_a)\cap B(0,R)$ for which 
there exists $m\geq n$ such that $|f_a^m(x)|\leq R$.
Thus $K(R)\subset Z_n$ for all $n\in\N$.

Let $L$ be the cardinality of $\{s\in S\colon |s|\leq N\}$. Then
\begin{equation}\label{3m}
\begin{aligned}
\sum_{U\in Z_n}(\diam U)^t
&= L \sum_{U\in Y_n}(\diam U)^t
= L \sum_{\substack{s\in S\\ |s|\leq N}}\sum_{m\geq n} \sum_{U\in X_m(A^s)}(\diam U)^t
\\ &
\leq L \sum_{\substack{s\in S\\ |s|\leq N}}\sum_{m\geq n} \tau^{m-1} \left(\diam A^s\right)^t
= L^2 \sum_{m\geq n} \tau^{m-1} \left(\diam A^0\right)^t
\\ &
= L^2  \left(\diam A^0\right)^t \frac{\tau^{n-1}}{1-\tau}
\end{aligned}
\end{equation}
by~\eqref{3j}.
Since the right hand side tends to $0$ as $n\to\infty$, we deduce that $\dim K(R)\leq t$.
\end{proof}

\section{Proof of the lower bounds} \label{lowerbound}
The proof is based on the theory of iterated functions systems. A similar method was used
in~\cite{Baranski2009} to estimate the dimensions of Julia sets from below.
In particular, we will use the following result~\cite[Proposition~9.7]{Falconer1990}.
\begin{lemma} \label{lemma4}
Let $S_1,\dots,S_m$ be contractions on a closed subset $K$ of $\R^d$ such that
there exists $b_1,\dots,b_m\in (0,1)$ with 
\begin{equation}\label{4b}
b_j|x-y|\leq |S_j(x)-S_j(y)| 
\quad \text{for } x,y\in K \text{ and } 1\leq j\leq m.
\end{equation}
Suppose that $K_0$ is a non-empty compact subset of $K$ with
\begin{equation}\label{4c}
K_0=\bigcup_{j=1}^m S_j(K_0) 
\end{equation}
and $S_j(K_0)\cap S_k(K_0)=\emptyset$ for $j\neq k$.
Let $t>0$ with 
\begin{equation}\label{4d}
\sum_{j=1}^m b_j^t=1. 
\end{equation}
Then $\dim K_0\geq t$.
\end{lemma}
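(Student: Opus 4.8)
\medskip
\noindent\textbf{Proof proposal.} This is the classical lower bound for the attractor of an iterated function system, and the plan is to place a natural measure on $K_0$ and invoke the mass distribution principle. First I would set up symbolic dynamics: for a finite word $\mathbf i=(i_1,\dots,i_k)$ over $\{1,\dots,m\}$ write $S_{\mathbf i}=S_{i_1}\circ\cdots\circ S_{i_k}$ and $K_{\mathbf i}=S_{\mathbf i}(K_0)$. By~\eqref{4c} we have $S_j(K_0)\subset K_0$, so the $K_{\mathbf i}$ are compact and nested along the word tree, with $\diam K_{\mathbf i}\le c^{|\mathbf i|}\diam K_0$ where $c<1$ bounds the Lipschitz constants of the $S_j$. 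The lower bound~\eqref{4b} makes each $S_j$, hence each $S_{\mathbf i}$, injective on $K_0$; together with the disjointness of $S_1(K_0),\dots,S_m(K_0)$ this gives, by induction on $k$, the disjoint decomposition $K_0=\bigsqcup_{|\mathbf i|=k}K_{\mathbf i}$, so the coding map $\pi\colon\{1,\dots,m\}^{\N}\to K_0$, $\pi(\omega)=\bigcap_k K_{\omega_1\cdots\omega_k}$, is a well-defined continuous bijection.

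Because $\sum_j b_j^t=1$ by~\eqref{4d}, assigning mass $b_{i_1}^t\cdots b_{i_k}^t$ to the cylinder $[\mathbf i]$ is consistent and extends to a Borel probability measure $\nu$ on $\{1,\dots,m\}^{\N}$; set $\mu=\pi_*\nu$. Since $\pi$ is a bijection we have $\pi^{-1}(K_{\mathbf i})=[\mathbf i]$, so $\mu$ is a probability measure on $K_0$ with $\mu(K_{\mathbf i})=b_{i_1}^t\cdots b_{i_k}^t$ for every word $\mathbf i$. I would also record that, the $S_j(K_0)$ being disjoint compact sets, $\delta:=\min_{j\ne k}\operatorname{dist}(S_j(K_0),S_k(K_0))>0$, and that applying~\eqref{4b} to $S_{\mathbf w}$ yields $\operatorname{dist}(K_{\mathbf wj},K_{\mathbf wk})\ge (b_{w_1}\cdots b_{w_\ell})\,\delta$ whenever $j\ne k$ and $\mathbf w=(w_1,\dots,w_\ell)$.

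The heart of the matter is to prove $\mu(B(x,r))\le Nr^t$ for all small $r>0$, with $N$ independent of $x$ and $r$. Fix $r$ and let $\mathcal Q$ be the stopping family of words $\mathbf i=(i_1,\dots,i_k)$ with $b_{i_1}\cdots b_{i_{k-1}}\ge r>b_{i_1}\cdots b_{i_k}$. Every infinite word has a unique prefix in $\mathcal Q$, so $\{K_{\mathbf i}\colon\mathbf i\in\mathcal Q\}$ is a countable partition of $K_0$ into pieces of $\mu$-measure $<r^t$. If $\mathbf i,\mathbf j\in\mathcal Q$ are distinct, their longest common prefix $\mathbf w$ is a proper prefix of $\mathbf i$ (elements of $\mathcal Q$ are pairwise incomparable), hence $b_{w_1}\cdots b_{w_\ell}\ge b_{i_1}\cdots b_{i_{|\mathbf i|-1}}\ge r$; since $\mathbf i$ and $\mathbf j$ extend $\mathbf w$ by different letters, the separation estimate above gives $\operatorname{dist}(K_{\mathbf i},K_{\mathbf j})\ge r\delta$. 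Choosing for each $\mathbf i\in\mathcal Q$ with $K_{\mathbf i}\cap B(x,r)\ne\emptyset$ a point of that intersection produces points of $B(x,r)$ that are pairwise at distance $\ge r\delta$, so a volume (packing) count bounds their number by $N:=(1+2/\delta)^d$. Summing over these finitely many pieces gives $\mu(B(x,r))\le Nr^t$, hence $\mu(U)\le N(\diam U)^t$ for every set $U$ of small diameter (as $U$ lies in a ball of radius $\diam U$). The mass distribution principle~\cite{Falconer1990} then yields $\mathcal H^t(K_0)\ge \mu(K_0)/N=1/N>0$, so $\dim K_0\ge t$.

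I expect the packing step to be the main obstacle. There is no hypothesis relating the upper and lower Lipschitz constants of the $S_j$, so the pieces $K_{\mathbf i}$ with $\mathbf i\in\mathcal Q$ can be far smaller than $r$, and one cannot simply say that a bounded number of disjoint sets of size comparable to $r$ fit near $B(x,r)$. The resolution is exactly the observation above: two distinct stopped words must branch at a level whose prefix-product is still at least $r$, and the \emph{lower} Lipschitz bound~\eqref{4b} converts that into separation on the scale of $r$ — which is also the structural reason the exponent $t$ is governed by the $b_j$ rather than by the contraction ratios of the $S_j$.
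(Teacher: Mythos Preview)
Your argument is correct and is exactly the standard mass-distribution proof of this result; the handling of the packing step via the lower Lipschitz separation at the branching level is the right idea and is carried out cleanly. The paper does not give its own proof of this lemma but simply quotes it from Falconer's book~\cite[Proposition~9.7]{Falconer1990}, where essentially the argument you wrote appears.
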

Since the left hand side of~\eqref{4d} is a decreasing function of $t$, it follows
that if
\begin{equation}\label{4e}
\sum_{j=1}^m b_j^t>1,
\end{equation}
then $\dim K_0> t$.
\begin{proof}[Proof of Theorem~\ref{thm4} and the lower bound in Theorem~\ref{thm3}]
Let $N\in\N$ with $N\geq a/\rho$ and put $R:=8\rho N$ and $K:=B(-\overline{a},R)\cap H_{\geq M}$.
Note that if $N$ is sufficiently large, then $R>M-a$ so that $K$ is non-empty.

Next we note that if $x=(x_1,\dots,x_d)\in\R^d$ with $x_d>\log R$, then
\begin{equation}\label{4e0}
|f(x)+a|=|f(x)|=e^{x_d}>R
\end{equation}
and thus $f(x)\notin B(-\overline{a},R)$.
It follows that 
\begin{equation}\label{4e0a}
\Lambda^r(K)\subset A^r:=H_{\leq \log R}\cap T(r)
\end{equation}
for $r\in S$.

Put $L:=a+\log R=a+\log(8\rho N)$.  For $y\in A^r$ we have
\begin{equation}\label{4e1}
|y+\overline{a}|^2\leq \sum_{j=1}^{d-1}(2|r_j|+1)^2\rho^2 +L^2
\leq 8\rho^2|r|^2+(d-1)\rho^2+L^2 \leq 8(\rho^2|r|^2+L^2)
\end{equation}
if $N$ and hence $L$ are large.
Since $a\leq \rho N$ we have $L\leq \rho N+\log (8\rho N)\leq 2\rho N$ if $N$ and $L$ are large.
Thus we see that if $|r|\leq N$ and $y\in A_r$, then
\begin{equation}\label{4e2}
|y+\overline{a}|^2\leq 8( \rho^2 N^2 +4\rho^2 N^2) = 40 \rho^2 N^2 \leq R^2.
\end{equation}
Thus $A^r\subset K$ if $|r|\leq N$, provided $N$ is sufficiently large.
Hence $\Lambda^r(K)\subset K$ if $r\in S$ and $|r|\leq N$.
Together with~\eqref{2a1} this implies that the $\Lambda^r$ are contractions on~$K$.

It follows that the functions $S_j$ of the form $S_j=\Lambda^r\circ \Lambda^s$,
where $r,s\in S$ and $|r|,|s|\leq N$, are also contractions and hence form an
iterated function system on $K$.
We will apply Lemma~\ref{lemma4} to this iterated function system.

It follows from Lemma~\ref{lemma1} that 
$\ell(D\Lambda^r(x))=\ell(D\Lambda(x))\geq c_3/R$ for $x\in K$ and $r\in S$.
By~\eqref{4e1} we also have
\begin{equation}\label{4f}
\ell(D\Lambda^s(y)) 
=\ell(D\Lambda(y)) 
\geq \frac{c_3}{|y+\overline{a}|}
\geq \frac{c_3}{2\sqrt{2}\sqrt{\rho^2|r|^2+L^2}}
\quad \text{for } y\in A^r.
\end{equation}
Hence 
\begin{equation}\label{4g}
\ell(D(\Lambda^s\circ \Lambda^r)(y)) 
\geq
\ell(D\Lambda^s(\Lambda^r(x))) 
\cdot\ell(D\Lambda^r(x)) 
\geq \frac{c_3^2}{2\sqrt{2}R\sqrt{\rho^2|r|^2+L^2}}
\end{equation}
for $x\in K$.
It follows that 
\begin{equation}\label{4h}
|(\Lambda^s\circ \Lambda^r)(x) -(\Lambda^s\circ \Lambda^r)(y)|\geq b_{r,s}|x-y|
\end{equation}
with
\begin{equation}\label{4i}
b_{r,s}=\frac{c_3^2}{2\sqrt{2}R\sqrt{\rho^2|r|^2+L^2}},
\end{equation}
for $|r|,|s|\leq N$ and $x,y\in K$.
The limit set $K_0$ of the iterated function system generated by the 
functions $\Lambda^s\circ \Lambda^r$
is contained in $J_{\rm bd}(f_a)$.
It thus follows from Lemma~\ref{lemma4} and the remark following it that $\dim J_{\rm bd}(f_a)>t$ if 
\begin{equation}\label{4i1}
\sum_{\substack{r\in S\\ |r|\leq N}} \sum_{\substack{s\in S\\ |s|\leq N}} b_{r,s}^{t}>1.
\end{equation}

Let the cube $P_r$ be defined as in the proof of Lemma~\ref{lemma3a}.
Then each $P_r$ has volume $6^{d-1}$ and the union of all $P_r$ for which $|r|\leq N$ covers $B(0,N)$
and thus has volume at least $\pi^{(d-1)/2}N^{d-1}/\Gamma((d+1)/2)$.
Thus the set of all $s\in S$ for which $|s|\leq N$
has at least $\lfloor 6^{1-d}\pi^{(d-1)/2}N^{d-1}/\Gamma((d+1)/2)\rfloor$ elements.
Recalling that $R=8\rho N$ we deduce that there exist a constant $c_8$ such that
with $b:=L/\rho=(a+\log(8\rho N))/\rho$ we have
\begin{equation}\label{4j}
\begin{aligned}
\sum_{\substack{r\in S\\ |r|\leq N}} \sum_{\substack{s\in S\\ |s|\leq N}} b_{r,s}^{t}
&\geq \left\lfloor\frac{6^{1-d}\pi^{(d-1)/2}N^{d-1}}{\Gamma\left(\frac{d+1}{2}\right)}\right\rfloor
\frac{c_3^{2t}}{(2\sqrt{2}R)^{t}}\sum_{\substack{r\in S\\ |r|\leq N}} \frac{1}{(\rho^2|r|^2+L^2)^{t/2}}
\\ &
\geq c_8 N^{d-1-t}\sum_{\substack{r\in S\\ |r|\leq N}} \frac{1}{(|r|^2+b^2)^{t/2}},
\end{aligned}
\end{equation}
for $d-1\leq t\leq d$.

For large $N$ we have $N\geq b=(a+\log(8\rho N))/\rho\geq 3\sqrt{d-1}$. 
Thus Lemma~\ref{lemma3a} is applicable.

Suppose first that $t=d-1$. 
Using~\eqref{3c2} we find with $c_9:=c_8 c_5$ that
\begin{equation}\label{4j1}
\sum_{\substack{r\in S\\ |r|\leq N}} \sum_{\substack{s\in S\\ |s|\leq N}} b_{r,s}^{d-1}
\geq c_9 \log \frac{N}{b}
= c_9 \log \frac{N\rho}{a+\log(8\rho N)}.
\end{equation}
The right hand side of~\eqref{4j1} tends to $\infty$ as $N\to\infty$.
In particular, it is greater than $1$ for large $N$ so that~\eqref{4i1} holds for 
$t=d-1$. Thus $\dim J_{\rm bd}(f_a)>d-1$.
This proves Theorem~\ref{thm4}.

Now we consider the behavior as $a\to\infty$.
Let $t=d-1+\gamma(a)$ where
\begin{equation}\label{4j2}
\gamma(a):=
\frac12 \frac{\log\log a}{\log a} - \frac{\log\log\log a}{\log a}.
\end{equation}
Then $d-1<t\leq d$ for large~$a$.
We also put $\beta(a):=e^{1/\gamma(a)}$ and note that $\beta(a)\to\infty$ as $a\to\infty$.
We may choose $N$ such that $N\sim a\beta(a)$ as $a\to\infty$. 
Since $\log\beta(a)=1/\gamma(a)$ we find that 
$L=a+\log(8\rho N)\sim a$ and hence $b=L/\rho\sim a/\rho$.

With $C:=2/\rho$ we thus have $Nb\leq Ca^{2}\beta(a)$ and $N/b\geq \beta(a)/C$ for large~$a$.
By the definition of $\beta(a)$ we have $\beta(a)^{\gamma(a)}=e$.
For large $a$ we also have $\frac12\leq C^{\gamma(a)}\leq 2$.
It thus follows from~\eqref{4j} and Lemma~\ref{lemma3a} that
\begin{equation}\label{4k}
\begin{aligned}
\sum_{\substack{r\in S\\ |r|\leq N}} \sum_{\substack{s\in S\\ |s|\leq N}} b_{r,s}^{t}
&\geq 
c_9 \frac{(Nb)^{d-1-t}}{t-d+1}\left( 1-\left(\frac{N}{b}\right)^{d-1-t}\right) 
\\ &
\geq
c_9 \frac{\left(Ca^{2}\beta(a)\right)^{-\gamma(a)}}{\gamma(a)}\left( 1-\left(\frac{\beta(a)}{C}\right)^{-\gamma(a)}\right) 
\\ &
\geq
\frac{c_9}{2e} \frac{a^{-2\gamma(a)}}{\gamma(a)}\left( 1-\frac{2}{e}\right)
\end{aligned}
\end{equation}
for large~$a$.
It is easy to see that $\log\gamma(a)=\log\log\log a-\log\log a+\O(1)$ as $a\to\infty$.
It thus follows that 
\begin{equation}\label{4k1}
\log\!\left( \frac{a^{-2\gamma(a)}}{\gamma(a)} \right)
=-2\gamma(a)\log a-\log\gamma(a)
=\log\log\log a +\O(1)
\end{equation}
as $a\to\infty$.
Thus  the right hand side of~\eqref{4k} tends to $\infty$ as $a\to\infty$.
Hence~\eqref{4i1} holds for $t=d-1+\gamma(a)$ and large~$a$. 
This proves the lower bound in Theorem~\ref{thm3}.
\end{proof}

\noindent Mathematisches Seminar\\
Christian-Albrechts-Universit\"at zu Kiel\\
Ludewig-Meyn-Str.\ 4\\
24098 Kiel, Germany

\medskip
\noindent
E-mail: {\tt bergweiler@math.uni-kiel.de}

\bigskip

\noindent School of Mathematics\\
Taiyuan University of Technology\\
Taiyuan 030024\\
China

\medskip
\noindent
E-mail: {\tt dingjie@tyut.edu.cn}
\end{document}